\newtheorem{Lemma}{Lemma}
\newtheorem{theorem}{Theorem}
\newcommand{\e}{\varepsilon}
\newcommand{\al}{\alpha}
\newcommand{\succs}{\succ\!\!\!\succ \, }
\newcommand{\precs}{\prec\!\!\!\prec \,   }
\newcommand{\A}{\mathcal{A}}
\newcommand{\ph}{\varphi}
\newcommand{\btt}{\theta}
\newcommand{\FF}{\mathcal{F}}
\newcommand{\wtl}{\widetilde}
\newcommand{\rr}{\mathrm{r}}
\begin{document}

\title[Intersecting free subgroups in free products]
 {Intersecting free subgroups in free products of left ordered groups}

\author{S.\ V.\ Ivanov}
\address{Department of Mathematics\\
University of Illinois \\
Urbana\\   IL 61801\\ U.S.A.}
 \email{ivanov@illinois.edu}
\thanks{Supported in part by the NSF under grant  DMS 09-01782.}
\subjclass[2010]{Primary  20E06, 20E07, 20F60, 20F65.}

\begin{abstract}
A conjecture of Dicks and the author on rank of the intersection of factor-free subgroups in free products of groups is proved for the case of left ordered groups.
\end{abstract}
\maketitle

\section{Introduction}

Recall that the Hanna Neumann conjecture   \cite{N1}   claims that  if $F$ is a free group of rank
${\rm r}(F)$, $\bar {\rm r}(F) := \max (  {\rm r}(F)-1, 0)$ is the reduced rank of $F$, and $H_1$, $H_2$
are finitely generated subgroups of $F$, then $\bar { \rm{r} } (H_1 \cap H_2)
\le \bar{ {\rm r}} (H_1) \bar { \rm r}  (H_2)$.  It was shown by Hanna
Neumann  \cite{N1}  that  $\bar {\rm{r} } (H_1 \cap H_2) \le 2 \bar{ \rm{r}} (H_1)
\bar { \rm{r}} (H_2)$. For more discussion and results on this problem,  the reader is referred to
\cite{D}, \cite{D2}, \cite{Fr},  \cite{Min1},  \cite{N2}, \cite{St}.

More generally,  let  $\FF = \prod_{\alpha \in I}^* G_\alpha$ be the free product of some groups
$G_\al$, $\al \in I$. According to the classic Kurosh subgroup
theorem  \cite{K}, every subgroup $H$ of $\FF$
is  a free product $ F(H) * \prod^*  s_{{\al,\beta}} H_{\al,\beta} s_{{\al,\beta}}^{-1}$, where $H_{{\al,\beta}}$ is a subgroup of $G_\al$, $s_{{\al,\beta}} \in
\FF$, and $F(H)$ is a free subgroup of
$\FF$  such that, for every $s \in \FF$
and $\gamma \in I$,  it is true that $F(H) \cap
s G_\gamma s^{-1} =\{ 1 \}$. We say that $H$
is a {\em factor-free} subgroup of  $\FF$  if $H=F(H)$ in the above form of $H$, i.e., for every $s \in
\FF$ and $\gamma \in I$, we have $H \cap s
G_\gamma s^{-1} =\{ 1 \}$. Since a factor-free subgroup $H$ of
$\FF$  is free,  the  reduced
rank $\bar{ {\rm r}} (H) := \max ( {\rm r} (H)-1,0)$ of $H$ is well defined.
Let $q^*= q^*(G_\al, \al \in I)$ denote the minimum of orders $>2$ of finite subgroups of groups $G_\al$, $\al
\in I$, and  $q^* := \infty$ if there are no such subgroups. If  $q^* = \infty$, define
$ \tfrac{q^*}{q^*-2} := 1$. It was shown by Dicks and the author \cite{DIv} that if
$H_1$, $H_2$ are finitely generated factor-free subgroups of  $\FF$, then
$$
\bar \rr(H_1\cap H_2)  \le  2\tfrac{q^*}{q^*-2}
\bar \rr(H_1) \bar \rr(H_2)   .
$$

It was conjectured by Dicks and the author \cite{DIv} that if groups $G_\al$, $\al \in I$,  contain no involutions then, similarly to the
 Hanna Neumann conjecture,  the coefficient 2 could be left out  and
 \begin{equation}\label{conj}
 \bar \rr(H_1\cap H_2)  \le  \tfrac{q^*}{q^*-2}
 \bar \rr(H_1) \bar \rr(H_2)  .
 \end{equation}

 A special case of this generalization of the Hanna Neumann conjecture is established by Dicks and the author \cite{DIv2} by proving that if $H_1, H_2$ are finitely generated factor-free subgroups of the free product
 $\FF$  all of  whose  factors are groups of order 3, then, indeed, $\bar \rr(H_1\cap H_2)  \le  \tfrac{q^*}{q^*-2}
 \bar \rr(H_1) \bar \rr(H_2) = 3  \bar \rr(H_1) \bar \rr(H_2)$.
We remark that it follows from results of \cite{DIv}  that the last inequality (as well as \eqref{conj}) is sharp and may not be improved.
Here is another special case when the conjectured inequality \eqref{conj} holds true.

\begin{theorem}\label{th1}  Suppose that $G_\al$, $\al \in I$, are left (or right) ordered groups,
$\FF = \prod_{\alpha \in I}^* G_\alpha$ is their  free product, and
$H_1$, $H_2$ are finitely generated factor-free subgroups of $\FF$. Then
$$
\bar \rr(H_1\cap H_2)  \le  \tfrac{q^*}{q^*-2}
\bar \rr(H_1) \bar \rr(H_2) = \bar \rr(H_1) \bar \rr(H_2)  .
$$
Moreover, let  $S(H_1, H_2)$ denote a set of  representatives of those double cosets $H_1 t H_2$ of $\FF$, $t \in \FF$,   that have the property $H_1 \cap t H_2 t^{-1} \ne  \{ 1 \}$. Then
$$
\bar \rr(H_1, H_2) := \sum_{s \in S(H_1, H_2)} \bar \rr(H_1\cap s H_2 s^{-1})    \le   \bar \rr(H_1) \bar \rr(H_2) .
$$
\end{theorem}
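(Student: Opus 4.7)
The plan is to prove Theorem~\ref{th1} by a graph-theoretic fiber-product construction on Stallings-type graphs representing $H_1$ and $H_2$, sharpened in the left-ordered setting by exploiting the theorem of Vinogradov that the free product of left ordered groups is itself left ordered. Since $\FF$ is then left ordered and in particular torsion-free, we have $q^*=\infty$ and $\tfrac{q^*}{q^*-2}=1$, so the content of the statement is to obtain the bound $\bar\rr(H_1, H_2) \le \bar\rr(H_1)\bar\rr(H_2)$, which is sharper by a factor of $2$ than the unconditional bound from \cite{DIv}.

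The first step is the standard setup: each finitely generated factor-free subgroup $H_i \le \FF$ is encoded by a finite connected $\FF$-labeled graph $\Gamma_i$ (a Stallings-type graph for free products, as developed in \cite{DIv}) whose reduced Euler characteristic equals $\bar\rr(H_i)$. The connected components of the fiber product $\Gamma_1 \times_\FF \Gamma_2$ are in bijection with the nontrivial double cosets in $S(H_1, H_2)$, and the component corresponding to $s \in S(H_1, H_2)$ computes $\bar\rr(H_1 \cap s H_2 s^{-1})$. Hence $\bar\rr(H_1, H_2)$ is precisely the sum of reduced Euler characteristics of the nontrivial components of $\Gamma_1 \times_\FF \Gamma_2$, and the theorem reduces to bounding this global quantity in terms of $\bar\rr(H_1)$ and $\bar\rr(H_2)$.

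The heart of the argument is a local edge-counting inequality at each ``factor vertex'' of the fiber product. I would fix a left order on $\FF$, built from orders on the $G_\alpha$ via Vinogradov's construction, and use it at every factor vertex to select a canonical incident edge (say, the one labeled by the $<$-minimal element). This provides an orientation or marking of $\Gamma_1 \times_\FF \Gamma_2$ which allows one to count excess edges without the double counting that forces the extra factor of $2$ in the unconditional setting of \cite{DIv}. The main obstacle is to assemble the resulting local sharpenings into a \emph{global} inequality; this requires verifying that the order-based markings interact coherently with the fiber-product structure and with the cycle structure of each component, so that summing the local excesses over all factor vertices of the fiber product yields at most $\bar\rr(H_1) \bar\rr(H_2)$ rather than twice that. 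I expect the real technical work of the proof to lie in setting up and carrying out this combinatorial bookkeeping, in a manner reminiscent in spirit to arguments used for the Hanna Neumann conjecture for free groups, but exploiting directly the availability of the left order on the whole group $\FF$.
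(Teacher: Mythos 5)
Your setup is correct and matches the paper: the components of the fiber product (the paper's $\Psi(H_1,H_2)$, built from the irreducible $\A$-graphs $\Psi_{o_i}(H_i)$) are the core graphs of the intersections $H_1\cap sH_2s^{-1}$, so $\bar\rr(H_1,H_2)=-\chi(\Psi(H_1,H_2))$ and the theorem reduces to an edge/Euler-characteristic count. But this part is standard (Lemmas 1--3 of the paper), and at the point where the actual work begins your proposal stops: you say you would ``use the order at every factor vertex to select a canonical incident edge (say, the one labeled by the $<$-minimal element)'' and then acknowledge that ``the real technical work'' of making the local markings cohere globally remains to be done. That is precisely the missing idea, and the specific rule you suggest would not work. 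A purely local selection of one edge per secondary (factor) vertex counts something on the order of the number of secondary vertices, not $-\chi$; there is no reason such a choice produces exactly $\bar\rr(H_i)$ edges in $\Psi_{o_i}(H_i)$, nor that it is respected by the projections $\tau_i$.

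What the paper actually does is define a genuinely non-local distinguished set: an edge $e$ is \emph{maximal} if there are two reduced infinite rays $p=e_1e_2\dots$ and $q=f_1f_2\dots$ from a common primary vertex with $e=e_1$, $\btt(e_1)>\btt(f_1)$, and all even partial labels $\ph(e_1\dots e_{2j})$, $\ph(f_1\dots f_{2j})$ negative in the left order on $\FF$. Two properties carry the proof: (i) in any finite irreducible $\A$-graph the set $D$ of maximal edges is \emph{good for cutting} --- deleting $D\cup D^{-1}$ leaves only components of Euler characteristic $0$, so $|D|=-\chi$ exactly; this needs both an existence statement (Lemma 7, via strongly positive/negative cyclic words, Lemmas 4--6) and a non-excess statement (the tree/$\sigma$-cycle argument in Lemma 8); and (ii) maximality is defined by conditions visible in the image under the locally injective, syllable-preserving maps $\tau_i$, so $\tau_i(D)\subseteq D_i$ and $\bar\rr(H_1,H_2)=|D|\le|D_1|\cdot|D_2|=\bar\rr(H_1)\bar\rr(H_2)$. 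Your proposal contains neither the correct definition of the distinguished edges nor either of the two verifications, so as written it is a plan for the easy half of the proof with a placeholder where the theorem's content lies.
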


We remark that  Antol\'in, Martino, and Schwabrow \cite{ABC} proved  a more general result on  Kurosh rank of the intersection of subgroups of free products of right ordered groups by utilizing the Bass--Serre theory of groups acting on trees and some ideas of Dicks \cite{D2}. Our proof of Theorem~\ref{th1} seems to be of independent interest as it uses explicit geometric construction of graphs, representing subgroups of free products, that are often more suitable for counting arguments, see \cite{I08}, \cite{I16}.

It is fairly easy to see that Theorem~\ref{th1} implies both the Hanna Neumann conjecture and the strengthened  Hanna Neumann conjecture, put forward by W. Neumann \cite{N2}, see Sect. 5. The strengthened  Hanna Neumann conjecture claims that
if $H_1$, $H_2$ are finitely generated subgroups of  a free group $F$, then
$$
 \sum_{s \in S(H_1, H_2)} \bar \rr(H_1\cap s H_2 s^{-1})    \le   \bar \rr(H_1) \bar \rr(H_2) ,
$$
where  the set $S(H_1, H_2)$ is defined as in  Theorem~\ref{th1} with $F$ in place in $\FF$.
Recall that Friedman \cite{Fr} proved the strengthened  Hanna Neumann conjecture by making use of sheaves on graphs and Mineyev \cite{Min1} gave a proof to the strengthened  Hanna Neumann conjecture by using Hilbert modules and group actions, see also Dicks's proof \cite{D2}. Similarly to Dicks  \cite{D2} and   Mineyev  \cite{Min1}, we also use the idea of group ordering and special sets of edges, however, our arguments deal directly with core graphs of subgroups of free products that are analogous to Stallings graphs \cite{St} representing subgroups of free groups.

In Sect.~2, we introduce necessary definitions and basic  terminology. In Sect.~3, we define and study strongly positive words in free products of left ordered groups. Sect.~4 contains the proof of  Theorem~\ref{th1}. In Sect.~5, we briefly look at the case of free groups.

\section{Preliminaries}

Let $G_\al$, $\al \in I$,  be  nontrivial
groups,  $\FF = \prod_{\alpha \in I}^* G_\alpha$ be their  free product, and $H$ a finitely generated factor-free subgroup of $\FF$, $H \ne \{ 1\}$.
Consider an alphabet $\A = \cup_{\al \in I} G_\al$, where $G_{\al} \cap G_{\al'} =\{ 1\}$ if
$\al \ne \al'$.

Similarly to the graph-theoretic approach of article \cite{I08}, in a simplified version suitable for finitely generated factor-free subgroups of  $\FF$, see \cite{I99}, \cite{I01}, \cite{I10}, we first define a labeled $\A$-graph $\Psi(H)$ which  geometrically represents $H$ in a fashion analogous to the way the Stallings  graph represents a subgroup of a free group, see \cite{St}.

If $\Gamma $ is a graph, $V \Gamma $ denotes the vertex set of  $\Gamma $ and
$E \Gamma $ denotes the  set of oriented edges of $\Gamma $. If $e \in E \Gamma $,  $e_-$, $e_+$ denote the initial, terminal, resp., vertices of  $e$ and $e^{-1}$ is the edge with opposite orientation, where $e^{-1} \ne  e$ for every $e \in E\Gamma$,  $(e^{-1})_- = e_+$, $(e^{-1})_+ = e_-$.
A finite {\em path} $p = e_1 \dots e_k$ in $\Gamma$ is a sequence of edges $e_i$ such that $(e_{i})_+ = (e_{i+1})_-$,  $i=1, \dots, k-1$.
Denote $p_- := (e_1)_-$, $p_+ := (e_k)_+$, and $|p| := k$, where $|p| $ is the {\em length} of $p$. We  allow
the possibility $|p| = 0$ and  $p = \{ p_- \} = \{ p_+ \}$. A finite path $p$ is called {\em closed} if~$p_- =p_+$. An {\em infinite} path $p = e_1 \dots e_k \dots$  is an infinite sequence of edges $e_i$ such that $(e_{i})_+ = (e_{i+1})_-$ for all $i=1,2, \dots$.  If $p=e_1 \dots e_k  \dots $ and $q = f_1 \dots f_\ell \dots$ are infinite paths such that $(e_1)_- = (f_1)_-$, then
$q^{-1} p := \dots  f_\ell^{-1} \dots f_1^{-1} e_1 \dots e_k  \dots  $ is a  {\em biinfinite} path.
A path $p$ is {\em reduced} if  $p$ contains no subpath of the form $e e^{-1}$, $e \in E\Gamma$.
A closed path $p = e_1 \dots e_k$  is {\em cyclically reduced} if $|p| >0$ and both $p$ and the cyclic permutation
$e_2 \dots e_k e_1$ of $p$ are reduced paths. The  {\em core} of a graph $\Gamma$,  denoted  $\mbox{core}(\Gamma)$,
is the minimal subgraph of $\Gamma$ that contains every  edge $e$ which can be included into a cyclically reduced path in $\Gamma$.

Let $\Psi$ be a  graph whose vertex set  $V \Psi$  consists of two disjoint parts  $V_P \Psi,  V_S \Psi$, so $V \Psi = V_P \Psi \cup V_S \Psi$. Vertices in  $ V_P \Psi$ are called {\em primary} and vertices in  $ V_S \Psi$ are termed {\em secondary}.
Every edge $e \in E \Psi$ connects  primary and secondary vertices, hence $\Psi$ is a bipartite graph.   $\Psi$ is called a  {\em labeled $\A$-graph}, or simply  {\em $\A$-graph} if
 $\Psi$ is equipped with a map  $\ph : E\Psi \to \A $, called  {\em labeling}, such that, for every $e \in E\Psi$, $\ph(e) \in \A = \cup_{\al \in I} G_\al$, $\ph(e^{-1}) =  \ph(e)^{-1}$  and if
$e_+ = f_+ \in V_S \Psi$, then $\ph(e), \ph(f) \in G_\al$ for the same $\al = \btt(e_+) \in I$, called the {\em type} of  the vertex $e_+ \in V_S \Psi$ and denoted $\al=\btt(e_+)$. If $e_+ \in V_S \Psi$,  define $\btt(e) :=  \btt(e_+)$ and $\btt(e^{-1})  := \btt(e_+)$.  Thus, for every   $e \in \Psi$, we have defined  an element
$\ph(e) \in \A$,  called the { label} of $e$, and  $\btt(e)   \in I$,  the type~of~$e$.

The reader familiar with  van Kampen diagrams over a free product of groups,
see \cite{LS}, will recognize that our labeling function $\ph : E\Psi \to \A $  is defined in the way analogous to labeling functions on van Kampen diagrams over free products of groups. Recall that van Kampen diagrams are planar 2-complexes whereas  graphs are 1-complexes, however, apart from this, the ideas of cancelations and edge foldings  work equally well for both diagrams and  graphs.

An $\A$-graph $\Psi$ is called {\em irreducible} if  properties (P1)--(P3) hold true:
\begin{enumerate}
\item[(P1)] If $e, f \in E \Psi$,  $e_- = f_- \in  V_P \Psi$, and $e_+ \ne f_+$, then   $\btt(e) \ne \btt(f)$.
\item[(P2)] If $e, f \in E \Psi$,  $e \ne f$, and $e_+= f_+ \in V_S \Psi$, then  $\ph(e) \ne  \ph(f)$ in $G_{\btt(e)}$.
\item[(P3)] $\Psi$ has no multiple edges, $\mbox{deg}_\Psi v >0$ for every $v \in V\Psi$,  and there is at most one vertex of degree 1 in $\Psi$ which, if exists, is primary.
    \end{enumerate}

Suppose $\Psi$ is a connected
irreducible $\A$-graph and a primary vertex $o \in V_P\Psi$ is distinguished so that deg${}_\Psi o =1$ if $\Psi$ happens to have a vertex of degree 1. Then  $o$ is called the {\em base} vertex of $\Psi = \Psi_o$.

As usual, elements of the free product  $\FF = \prod_{\alpha \in I}^* G_\alpha$
are regarded  as words over the alphabet $\A = \cup_{\al \in I} G_\al$.  A
{\em syllable} of a word $W$ over $\A$ is a maximal subword of $W$
all of whose letters belong to the same factor $G_\al$. The {\em
syllable length} $\| W \|$ of $W$  is the number of syllables
of $W$, whereas the {\em length} $|W|$ of $W$ is the number of all
letters in $W$.
A nonempty word $W$ over   $\A $ is called {\em reduced} if every
syllable of $W$ consists of a single letter.
Clearly, $|W| = \|
W \|$ if $W$ is reduced. An arbitrary nontrivial element of the
free product $\FF$  can  be uniquely
written as a reduced word. A word $W$  is called {\em cyclically reduced} if $W^2$ is reduced.
We write $U = W$ if words $U$, $W$
are equal as elements of $\FF$. The
literal (or letter-by-letter) equality of words $U$, $W$ is denoted $U \equiv W$.

The significance of irreducible $\A$-graphs for geometric interpretation of  factor-free subgroups
$H$ of $\FF$ is given in the following.

\begin{Lemma}\label{Lm1} Suppose $H$ is a finitely generated factor-free subgroup
of the free product $\FF =  \prod_{\alpha \in I}^* G_\alpha$, $H \ne \{ 1 \}$. Then there exists a finite connected
irreducible $\A$-graph $\Psi = \Psi_o(H)$, with the  base vertex $o$,  such that a reduced word $W$ over the alphabet $\A$ belongs to $H$ if and only if there is a reduced path $p$ in $\Psi_o(H)$  such that $p_- =p_+
=o$,  $\ph(p) = W$ in   $\FF$, and $| p | = 2|W|$.
\end{Lemma}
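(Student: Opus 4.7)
The plan is to mimic Stallings' construction for subgroups of free groups, adapted to the bipartite $\A$-graph framework. Fix a finite generating set $h_1,\dots,h_n$ of $H$ and write each $h_i$ as a reduced word $W_i \equiv g_{i,1}\cdots g_{i,k_i}$ over $\A$, with $g_{i,j}\in G_{\alpha_{i,j}}$ and $\alpha_{i,j}\ne \alpha_{i,j+1}$. I would build an initial ``flower'' $\A$-graph $\Psi_0$ at a base vertex $o\in V_P\Psi_0$ by gluing $n$ simple cycles: in the $i$th cycle, alternate primary vertices $o=v_{i,0},v_{i,1},\dots,v_{i,k_i}=o$ with secondary vertices $s_{i,1},\dots,s_{i,k_i}$, where $s_{i,j}$ has type $\alpha_{i,j}$ and the two edges joining it to its neighbors carry labels in $G_{\alpha_{i,j}}$ whose product equals $g_{i,j}$. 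Then every $W_i$ is the label of a closed walk of length $2|W_i|$ through $o$ in $\Psi_0$.

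Next I would apply iterated Stallings-style folding to enforce (P1)--(P2): whenever two distinct edges $e\ne f$ share the same primary starting vertex with $\btt(e)=\btt(f)$, identify their secondary endpoints $e_+$ and $f_+$; whenever two distinct edges end at a common secondary vertex with equal labels, identify them. Each fold strictly decreases the total number of vertices plus edges, so the process terminates. A standard Nielsen-type argument shows that the set of labels of closed walks at $o$ continues to generate the same subgroup of $\FF$, namely $H$, while factor-freeness of $H$ prevents any generator from collapsing to a trivial walk. Then enforce (P3) by iteratively pruning any non-base primary vertex of degree $1$ together with any secondary neighbor that becomes pendant, which removes branches carrying no reduced closed walk at $o$. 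Call the resulting finite connected irreducible bipartite $\A$-graph $\Psi=\Psi_o(H)$.

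It remains to check both directions of the stated correspondence. For the backward direction, let $p=e_1\cdots e_k$ be a reduced closed path at $o$ in $\Psi$: bipartiteness forces $k$ to be even, (P1) at each primary vertex $(e_{2j})_+$ forces $\btt(e_{2j})\ne\btt(e_{2j+1})$ so consecutive secondary types differ, and (P1) together with (P2) rules out any cancellation $\ph(e_{2j-1})\ph(e_{2j})=1$ at a secondary vertex (otherwise $e_{2j-1}^{-1}$ and $e_{2j}$ would contradict (P2) or the no-multiple-edges clause of (P3)). Hence $\ph(p)$ reads as a reduced word $W\in\FF$ of syllable length $k/2$, so $|p|=2|W|$, and $W\in H$ by lifting $p$ through the folding--pruning map to a walk in $\Psi_0$. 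For the forward direction, write $W\in H$ as a product of conjugates of the $h_i^{\pm1}$, concatenate the corresponding walks in $\Psi_0$, push down to $\Psi$, and freely reduce the resulting closed walk; irreducibility of $\Psi$ ensures this reduction terminates at a path whose label equals $W$ in $\FF$ and whose length equals $2|W|$. The main obstacle is the folding step: one must verify that it preserves $H$ exactly, neither enlarging nor shrinking it, and that the final graph realizes every $W\in H$, which is precisely where the factor-free hypothesis enters essentially.
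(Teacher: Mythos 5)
Your overall strategy (flower graph, Stallings-style folding, pruning) is the same as the paper's, but the key folding step is stated in a way that does not work, and this is precisely the step you yourself flag as ``the main obstacle'' without resolving it. When two distinct edges $e\ne f$ leave the same primary vertex with $\btt(e)=\btt(f)=\al$ and $e_+\ne f_+$, you propose to ``identify their secondary endpoints.'' After that identification the closed path $ef^{-1}$ based at $e_-$ carries the label $\ph(e)\ph(f)^{-1}$, which is a nontrivial element of $G_\al$ whenever $\ph(e)\ne\ph(f)$ in $G_\al$; conjugating by a path from $o$ to $e_-$ shows that the subgroup read off the folded graph now contains a nontrivial element of some $sG_\al s^{-1}$. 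So this fold strictly enlarges the subgroup, the invariant ``labels of closed walks at $o$ generate exactly $H$'' is destroyed, and no Nielsen-type argument will rescue it. Moreover, your second folding rule (merge coterminal edges with \emph{equal} labels) never applies to the resulting pair of parallel edges $e,f$, so the process cannot even reach a graph satisfying (P3). The missing ingredient --- and the one genuinely new point compared with the free-group case --- is a relabeling before the fold: replace $\ph(e')$ by $\ph(e')\ph(e)^{-1}\ph(f)$ for every edge $e'$ with $e'_+=e_+$. A path crossing the secondary vertex $e_+$ contributes the syllable $\ph(e')\ph(e'')^{-1}$, which depends only on these ratios, so the relabeling changes no path label; it arranges $\ph(e)=\ph(f)$, after which identifying $e$ with $f$ (edges \emph{and} endpoints) preserves the set of closed-walk labels at $o$, since any new walk through the merged vertex has the same label as an old walk detouring through $e^{-1}f$, whose label is now trivial.

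A second, smaller misplacement: factor-freeness does not enter to ``prevent a generator from collapsing to a trivial walk'' (redundant generators may well collapse, harmlessly). It enters to resolve multiple edges: if two distinct edges $e,f$ satisfy $e_-=f_-$ and $e_+=f_+\in V_S\Psi$, then $\ph(e)\ph(f)^{-1}$ is the label of a closed walk, hence (conjugating to $o$) gives an element of $H\cap sG_{\btt(e)}s^{-1}$; factor-freeness forces $\ph(e)=\ph(f)$ in $G_{\btt(e)}$, so the two edges may be identified and (P3) can be achieved. With the relabeling fold and this use of factor-freeness supplied, the rest of your argument (termination by edge count, pruning, and the two directions of the correspondence via reduced paths and (P1)--(P2)) goes through as in the paper.
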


\begin{proof} The proof is based on Stallings's folding techniques and is somewhat analogous to the proof of van Kampen lemma for diagrams over free products of groups, see \cite{LS} (in fact, it is simpler because foldings need not preserve the property of being planar for the diagram). A more general approach, suitable for an arbitrary subgroup of $\FF$, is discussed in  Lemmas~1, 4 \cite{I08}.

Let  $H = \langle V_1, \dots, V_k \rangle$ be  generated by reduced   words $V_1, \dots, V_k$ $ \in \FF$. Consider a graph $\wtl \Psi$ which consists of $k$ closed paths $p_1, \dots, p_k$ such that they have a single common vertex $o = (p_i)_-$, and $|p_i| = 2|V_i|$,  $i=1, \dots, k$.
Furthermore, we distinguish  $o$ as the base vertex of $\wtl \Psi$ and call $o$   primary, the vertices adjacent to $o$ are called secondary vertices and so on. The labeling function $\ph$ on $p_i$ is defined so that $\ph(p_i) = V_i$, $i=1, \dots, k$, where $\ph(p) := \ph(e_1) \dots \ph(e_\ell)$ if $p = e_1 \dots e_\ell$ and $e_1, \dots, e_\ell \in E\wtl \Psi$.

Clearly, $\wtl \Psi=\wtl \Psi_o$ is a finite connected   $\A$-graph with the base vertex $o$ that has the following  property

\begin{itemize}
\item[(Q)] A word $W  \in \FF$ belongs to $H$  if and only if there is a
path $p$ in $\wtl \Psi_o$ such that $p_- =p_+
=o$ and  $\ph(p) = W$.
\end{itemize}

However, $\wtl \Psi_o$ need not be irreducible  and we will do
 foldings of edges in $\wtl \Psi_o$ which  preserve  property (Q) and which aim to achieve  properties (P1)--(P2).

Assume that property (P1) fails for edges $e, f$ with  $e_- = f_- \in V_P\wtl \Psi_o$ so that  $e_+ \ne f_+$ and  $\btt(e) =\btt(f)$.
Let us redefine the labels  of all edges  $e'$ with $e'_+ = e_+$ so that $\ph(e') \ph(e)^{-1}$ does not change
and $\ph(e) = \ph(f)$ in $G_{\btt(e)}$.  Now we identify the edges $e$, $f$ and vertices $e_+$, $f_+$.  Observe that this folding preserves  property (Q)  ((P2) might fail) and decreases the edge number $|E \wtl \Psi_o|$.

If property (P2) fails for edges $e, f$ and  $\ph(e) = \ph(f)$ in $G_{\btt(e)}$, then we  identify the edges $e, f$.  Note property  (Q) still holds ((P1) might fail) and the number $|E \wtl \Psi_o|$ decreases.

Suppose property (P3) fails and there are two distinct edges  $e, f$ in $\wtl \Psi_o$ such that $e_- = f_-$, $e_+ = f_+ \in V_S \wtl \Psi_o$. By property  (Q), it follows from  $H$ being factor-free  that $\ph(e) = \ph(f)$ in $G_{\btt(e)}$.
Therefore, we can identify the edges $e, f$, thus preserving  property  (Q) and decreasing the number $|E \wtl \Psi_o|$.  If property (P3) fails so that there is a vertex $v$ of degree 1, different from $o$, then we delete $v$ along with the incident edge. Clearly, property  (Q)
still holds and the number $|E \wtl \Psi_o|$ decreases.

Thus, by induction on $|E \wtl \Psi_o|$, in polynomial time of size of input, which is the total length $\sum_{i=1}^{k} |V_i|$, we can effectively construct an irreducible  $\A$-graph $\Psi_o$ with property (Q).  Other stated properties of $\Psi_o$ are straightforward.
\end{proof}

The following lemma further elaborates on the correspondence between finitely generated
factor-free subgroups of the free product  $\FF$ and irreducible $\A$-graphs.

\begin{Lemma}\label{Lm2} Let  $\Psi_o$ be a finite connected
irreducible  $\A$-graph with the base vertex  $o$,
and $H= H(\Psi_o)$ be a subgroup of
$\FF $ that consists of all words $\varphi(p)$, where $p$ is
a  path in $\Psi_o$ with $p_- = p_+ = o$. Then $H$ is a
factor-free subgroup of  $\FF$   and
$\bar \rr(H)=- \chi(\Psi_o)$, where $\chi(\Psi_o) =  |V \Psi_o | - \frac 12|E \Psi_o | $ is
the Euler characteristic of~$\Psi_o$.
\end{Lemma}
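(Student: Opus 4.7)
The plan is to identify $H(\Psi_o)$ with the fundamental group $\pi_1(\Psi_o,o)$ via the labeling map $\varphi$, and to read off both factor-freeness and the rank formula from the combinatorics of reduced closed paths. I would begin by showing that for every reduced closed path $p=e_1\dots e_k$ in $\Psi_o$ based at $o$, the label $\varphi(p)$ is a reduced word in $\FF$ of syllable length $k/2$. Bipartiteness of $\Psi_o$ forces $k$ to be even, and at each secondary vertex $v_{2i-1}=(e_{2i-1})_+$ the pair $\varphi(e_{2i-1})\varphi(e_{2i})\in G_{\btt(v_{2i-1})}$ is nontrivial: otherwise $\varphi(e_{2i-1})=\varphi(e_{2i}^{-1})$ for two distinct edges both ending at $v_{2i-1}$, and (P2) would force $e_{2i-1}=e_{2i}^{-1}$, contradicting reducedness of $p$. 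At an intermediate primary vertex $v_{2i}$ the edges $e_{2i}^{-1}$ and $e_{2i+1}$ both issue from $v_{2i}$; they are distinct (by reducedness) and, by (P3), land at distinct secondary vertices, so (P1) yields $\btt(v_{2i-1})\ne\btt(v_{2i+1})$. Hence consecutive syllables lie in distinct factors, $\varphi(p)$ is a reduced word, and the map $p\mapsto\varphi(p)$ is injective on reduced closed paths at $o$.

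Closure of $H(\Psi_o)$ under products and inverses follows from concatenating loops at $o$ and iteratively removing $ee^{-1}$ pairs, which preserves labels in $\FF$. Together with the injectivity above, this identifies $H(\Psi_o)$ with $\pi_1(\Psi_o,o)$, a free group of rank $\tfrac12|E\Psi_o|-|V\Psi_o|+1=1-\chi(\Psi_o)$, yielding $\bar\rr(H)=-\chi(\Psi_o)$ whenever $H\ne\{1\}$.

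For factor-freeness, take $1\ne h\in H$ and let $p$ be the reduced closed path at $o$ with $\varphi(p)=h$. If $p$ is not cyclically reduced then $e_k=e_1^{-1}$, and $h$ is conjugate in $\FF$ to the label of the strictly shorter reduced closed path $e_2\dots e_{k-1}$ based at $(e_1)_+$. Iterating yields a nonempty cyclically reduced closed path $p^*$ at some vertex $v$ with $\varphi(p^*)$ conjugate to $h$. Property (P3) rules out closed paths of length $2$, so $|p^*|\ge 4$, and applying the preceding analysis at $v$ to the first and last edges of $p^*$ shows that the first and last syllables of $\varphi(p^*)$ lie in distinct factors. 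Thus $\varphi(p^*)$ is cyclically reduced in $\FF$ of syllable length at least $2$, and $h$ cannot be conjugate to any element of any $G_\gamma$. The only genuinely delicate point is this cyclic-reduction argument: verifying that it terminates at a cyclically reduced path of length at least $4$ uses all three irreducibility conditions (P1)-(P3) in an essential way.
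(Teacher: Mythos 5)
Your proof is correct and follows the same route as the paper's, which merely cites that $\pi_1(\Psi_o,o)$ is free of rank $-\chi(\Psi_o)+1$ and that the labeling homomorphism has trivial kernel by (P1)--(P2); you supply the details, including the factor-freeness argument that the paper leaves implicit. One small repair is needed in the last step: the cyclic-reduction process alternates between primary and secondary base vertices, so $p^*=e_1\dots e_k$ may end up based at a \emph{secondary} vertex $v$, in which case the first and last syllables of $\ph(p^*)$ are the single letters $\ph(e_1),\ph(e_k)\in G_{\btt(v)}$ and lie in the \emph{same} factor, so $\ph(p^*)$ is not literally a cyclically reduced word and your claim that its first and last syllables lie in distinct factors fails in this case. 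The fix is one line: pass to the cyclic permutation $e_k e_1\dots e_{k-1}$ based at the primary vertex $(e_k)_-$ (its label is conjugate to $\ph(p^*)$), or equivalently observe that $\ph(e_k)\ph(e_1)\ne 1$ by (P2) together with cyclic reducedness of $p^*$; after that your count yields a cyclically reduced word of syllable length $k/2\ge 2$ and the conclusion that $h$ is not conjugate into any $G_\gamma$ stands.
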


\begin{proof} This follows from  the facts that the fundamental group $\pi_1(\Psi_o, o)$ of
$\Psi_o$ at $o$ is free of rank $- \chi(\Psi_o)+1$ and that the homomorphism $\pi_1(\Psi_o, o) \to \FF$, given by $p \to \ph(p) $, where $p$ is a path with $p_-=p_+= o$, has the trivial kernel following from  properties (P1)--(P2).
\end{proof}

Suppose $H$  is a nontrivial finitely generated factor-free subgroup
of a free product $\FF = \prod_{\alpha \in I}^* G_\alpha$, and $\Psi_o = \Psi_o(H)$ is an
irreducible  $\A$-graph for $H$ as in Lemma~\ref{Lm1}.
Let $\Psi(H)$ denote the core of $\Psi_o(H)$. Clearly, $\Psi(H)$
has no vertices of degree $\le 1$ and $\Psi(H)$ is also an irreducible $\A$-graph.
It is easy to see that the graph  $\Psi_o(H)$ of $H$ can be obtained back from the core graph $\Psi(H)$  of $H$ by attaching a suitable path $p$ to  $\Psi(H)$ so that $p$ starts at a primary vertex $o$, ends in  $p_- \in V_P\Psi(H)$, and then by doing foldings of edges as in the proof of Lemma~\ref{Lm1}.

Now suppose  $H_1$, $H_2$  are  nontrivial finitely generated factor-free subgroups of $\FF$.
Consider a set $S(H_1, H_2)$ of representatives of those double cosets
$H_1 t H_2$ of $\FF$, $t \in \FF$,   that have the property $H_1 \cap t H_2 t^{-1} \ne  \{ 1 \}$.
For every $s \in S(H_1, H_2)$,  define the subgroup $K_s := H_1 \cap s H_2 s^{-1}$.
Analogously to  the case of free groups, see  \cite{N2}, \cite{St}, we now
construct a finite irreducible  $\A$-graph $\Psi(H_1, H_2)$ whose connected components are core graphs $\Psi(K_s)$, $s \in S(H_1, H_2)$.

First we define an $\A$-graph $\Psi_o'(H_1, H_2)$. The set  of primary vertices of
$\Psi_o'(H_1, H_2)$ is
$V_P \Psi_o'(H_1, H_2)   := V_P \Psi_{o_1}(H_1)\times V_P \Psi_{o_2}(H_2) .
$
Let
$$
\tau_i : V_P \Psi_o'(H_1, H_2) \to V_P \Psi_{o_i}(H_i)
$$
denote the projection map, $\tau_i((v_1, v_2)) = v_i$, $i=1,2$.

The set of secondary vertices $V_S \Psi_o'(H_1, H_2)$ of
$\Psi_o'(H_1, H_2)$ consists of equivalence classes $[u]_\al$, where  $u \in V_P \Psi_o'(H_1, H_2)$, $\al \in I$, with respect to  the minimal  equivalence relation generated by   the following relation   $\overset \al \sim$ on  $V_P \Psi_o'(H_1, H_2)$:  Define $v \overset \al \sim w$ if there are edges $e_i, f_i \in E \Psi_{o_i}(H_i)$ such that $(e_i)_- = \tau_i(v)$, $(f_i)_- = \tau_i(w)$,  $(e_i)_+ = (f_i)_+$, $i=1,2$, the edges  $e_i, f_i$ have type $\al$, and $\ph(e_1) \ph(f_1)^{-1} = \ph(e_2) \ph(f_2)^{-1}$ in $G_\al$. It is easy to see that  $\overset \al \sim$  is symmetric and transitive on distinct pairs and triples (but it could lack reflexive property).

The edges in $\Psi_o'(H_1, H_2)$  are defined so that $u \in  V_P \Psi_o'(H_1, H_2)$ and
$[v]_\al \in V_S \Psi_o'(H_1, H_2)$ are connected by an edge if and only if $u  \in [v]_\al$.

The type of a vertex $[v]_\al \in V_S \Psi_o'(H_1, H_2)$  is $\al$ and if  $e \in E\Psi_o'(H_1, H_2)$,  $e_- =u$, $e_+ = [v]_\al$, then $\ph(e) :=\ph(e_1)$, where $e_1  \in E\Psi_{o_1}(H_1)$
is an edge of type $\al$ with $(e_1)_- = \tau_1(u)$, when such an $e_1$ exists, and $\ph(e_1) :=b_\al \ne 1$, $b_\al \in G_\al$,   otherwise.

It follows from the definitions and properties (P1)--(P2) of $\Psi_{o_i}(H_i)$, $i=1,2$, that $\Psi_o'(H_1, H_2)$   is an  $\A$-graph with properties (P1)--(P2). Hence, taking the core of
$\Psi_o'(H_1, H_2)$, we obtain an irreducible  $\A$-graph which we  denote $\Psi(H_1, H_2)$.

In addition, it is not difficult to see that, when taking the connected component $\Psi_o'(H_1, H_2, o)$  of $\Psi_o'(H_1, H_2)$  that contains the vertex $o = (o_1, o_2)$ and inductively removing  from $\Psi_o'(H_1\cap H_2, o)$  vertices of degree 1 different from $o$, we obtain an irreducible  $\A$-graph $\Psi_o(H_1\cap H_2)$ with the base vertex $o$ that  corresponds to the intersection $H_1\cap H_2$ as in Lemma \ref{Lm1}.

Observe that it follows from the definitions and property (P1) for  $\Psi(H_i)$, $i=1,2$,   that, for every edge $e \in E \Psi(H_1, H_2)$ with $e_- \in V_P \Psi(H_1, H_2)$, there are unique edges
$e_i  \in E\Psi(H_i)$ such that  $\tau_i(e_-) =(e_i)_-$, $i=1,2$. Hence, by setting  $\tau_i(e) =e_i$, $\tau_i(e_+) =(e_i)_+$, $i=1,2$,   we extend  $\tau_i$ to the graph map
$$
\tau_i :  \Psi(H_1, H_2) \to  \Psi(H_i)  , \quad i=1,2  .
 $$
It follows from  definitions that $\tau_i$ is locally injective and $\tau_i$ preserves  syllables of $\ph(p)$ for every path $p$ with primary vertices $p_-, p_+$.

\begin{Lemma}\label{Lm3} Suppose $H_1$, $H_2$ are  finitely generated factor-free
subgroups of the free product $\FF$ and the set
$S(H_1, H_2)$ is not empty.  Then the connected components of the graph  $\Psi(H_1, H_2)$ are core graphs  $\Psi(H_1 \cap s H_2 s^{-1})$ of subgroups $H_1 \cap s H_2 s^{-1}$, $s \in S(H_1, H_2)$. In particular,
$$
\bar \rr(H_1, H_2) =
\sum_{s \in S(H_1, H_2) } \bar \rr(H_1 \cap s  H_2 s^{-1}) =
-\chi(\Psi(H_1, H_2) ) .
$$
\end{Lemma}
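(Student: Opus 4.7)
The approach is to analyze $\Psi(H_1,H_2)$ one connected component at a time, using the projections $\tau_1,\tau_2$ to identify each component with the core graph of a specific intersection $H_1\cap sH_2s^{-1}$. Fix a connected component $C$ of $\Psi(H_1,H_2)$, pick a primary vertex $v=(v_1,v_2)\in V_P C$, and, for $i=1,2$, choose a reduced path $q_i$ in $\Psi_{o_i}(H_i)$ from $o_i$ to $v_i$ with $s_i:=\ph(q_i)\in\FF$; set $s:=s_1s_2^{-1}$. By Lemma~\ref{Lm1} (with the base shifted to $v_i$), the reduced words labeling closed paths at $v_i$ in $\Psi_{o_i}(H_i)$ are exactly the elements of $s_i^{-1}H_is_i$.

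The main step is to verify that the subgroup $K_v$ of labels of reduced closed paths in $C$ at $v$ equals $s_1^{-1}H_1s_1\cap s_2^{-1}H_2s_2$, so that $s_1K_vs_1^{-1}=H_1\cap sH_2s^{-1}=K_s$. For the inclusion $K_v\subseteq s_1^{-1}H_1s_1\cap s_2^{-1}H_2s_2$, a reduced closed path $p$ at $v$ in $C$ projects, via the locally injective map $\tau_i$, to a path $\tau_i(p)$ in $\Psi(H_i)\subseteq\Psi_{o_i}(H_i)$ closed at $v_i$; since $\tau_i$ preserves the syllable sequence of $\ph(p)$, we have $\ph(p)=\ph(\tau_i(p))\in s_i^{-1}H_is_i$. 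The reverse inclusion needs a tracing argument: given $W\in s_1^{-1}H_1s_1\cap s_2^{-1}H_2s_2$, Lemma~\ref{Lm1} produces reduced closed paths $p_i$ at $v_i$ in $\Psi_{o_i}(H_i)$ with $\ph(p_i)=W$, and the product construction of $\Psi_o'(H_1,H_2)$ pairs $p_1$ and $p_2$ syllable-by-syllable — each two-edge passage through a secondary vertex being legitimized precisely by the defining equation $\ph(e_1)\ph(f_1)^{-1}=\ph(e_2)\ph(f_2)^{-1}$ of the relation $\overset{\al}{\sim}$ — producing a reduced closed path $p$ at $v$ in $\Psi_o'(H_1,H_2)$ with $\ph(p)=W$. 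Since $v\in C$ and $p$ is cyclically reduced, $p$ lies in $C$ and survives in the core.

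Next, I would show $C=\Psi(K_s)$ as labeled graphs. Because $\Psi(H_1,H_2)$ is the core of an $\A$-graph with properties (P1)--(P2), each component $C$ is itself an irreducible $\A$-graph with no vertex of degree $\le 1$, so $C$ equals its own core. Applying Lemma~\ref{Lm2} to $C$ with base $v$ identifies $K_v$ as a factor-free subgroup with $\bar\rr(K_v)=-\chi(C)$, and since $K_v$ is conjugate to $K_s$ via $s_1$, the core graphs agree up to label-preserving isomorphism: $C\cong\Psi(K_s)$. Moving $v$ inside $C$ only replaces $s$ by another representative of the double coset $H_1sH_2$, so distinct components yield distinct cosets in $S(H_1,H_2)$; conversely, each $s\in S(H_1,H_2)$ is realized because nontriviality of $K_s$ forces a cyclically reduced closed path in $\Psi_o'(H_1,H_2)$, which survives on passing to the core. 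The claimed Euler characteristic identity then follows from additivity of $\chi$ over components together with Lemma~\ref{Lm2}.

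The main technical obstacle is the tracing argument of the second paragraph: one must verify inductively that the syllable-wise pairing of $p_1$ and $p_2$ always lands on legal edges of $\Psi_o'(H_1,H_2)$ — this is precisely where $\overset{\al}{\sim}$ does the work — and that the resulting path stays reduced at every secondary vertex, so that its label, read as a word in $\FF$, coincides with $W$.
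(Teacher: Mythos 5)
Your proposal follows essentially the same route as the paper's proof: identify each connected component of $\Psi(H_1,H_2)$ with the core graph of an intersection $H_1\cap sH_2s^{-1}$ by means of the projections $\tau_1,\tau_2$ (one inclusion) and a syllable-by-syllable pairing of matching closed paths through the product construction (the other inclusion), then invoke Lemma~\ref{Lm2} and additivity of $\chi$ over components; you actually spell out the component identification in more detail than the paper does. There is, however, one inference that does not follow as written. You observe that moving the base vertex $v$ within a fixed component $C$ only changes $s$ within its double coset $H_1sH_2$; this shows that the map $C\mapsto H_1sH_2$ is \emph{well defined}, but you then conclude that ``distinct components yield distinct cosets,'' which is the \emph{injectivity} of that map and is a different statement. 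Injectivity is what prevents two distinct components from contributing to the same summand; without it you only obtain $\sum_s\bar\rr(H_1\cap sH_2s^{-1})\le-\chi(\Psi(H_1,H_2))$ rather than the asserted equality.

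The missing direction is: if two components produce representatives $s=s_1s_2^{-1}$ and $s'=s_1'(s_2')^{-1}$ lying in the same double coset, say $s'=h_1sh_2$ with $h_i\in H_i$, then the components coincide. This is exactly the converse the paper records as obvious after establishing \eqref{x12}, and it follows from the same tracing argument you already use for the reverse inclusion: from $s_1'(s_2')^{-1}=h_1s_1s_2^{-1}h_2$ one gets $(h_1s_1)^{-1}s_1'=s_2^{-1}h_2s_2'$, a common $\FF$-value of labels of paths from $v_i$ to $v_i'$ in $\Psi_{o_i}(H_i)$ for $i=1,2$; pairing these two paths syllable-wise through the relation $\overset{\al}{\sim}$ yields a path in $\Psi_o'(H_1,H_2)$ from $(v_1,v_2)$ to $(v_1',v_2')$, so the two base vertices lie in the same component. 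With that supplied (and a small touch-up in your reverse inclusion, where the closed path realizing $W\in K_v$ need not be cyclically reduced but still represents an element of $\pi_1(C,v)$ because the trees hanging off the core carry no fundamental group), your argument is complete and matches the paper's.
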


\begin{proof} As in Lemma~\ref{Lm1}, let $\Psi_{o_i}(H_i)$ be an irreducible $\A$-graph, corresponding to the subgroup $H_i$ of $\FF$, $i=1,2$, and $\Psi(H_i)$ denote the core of $\Psi_{o_i}(H_i)$.
Let $v_i \in V_P \Psi(H_i)$,  $i=1,2$, and $q(v_i)$ denote a path in
$\Psi_{o_i}(H_i)$ with $q(v_i)_- = o_i$ and  $q(v_i)_+ = v_i$. Suppose $X_i \in \FF$, $i=1,2$, and $H_1^{X_1} \cap H_2^{X_2} \ne \{ 1\}$, where  $H_i^{X_i} := X_i H_i X_i^{-1}$. Consider
an  irreducible $\A$-graph $\Psi_{u_i}( H_i^{X_i})$, $i=1,2$. Note that the core graph
$\Psi(H_1^{X_1} \cap H_2^{X_2})$ can be identified with a connected component, denoted  $\Psi_{(X_1, X_2)}(H_1, H_2)$,
of the irreducible  $\A$-graph $\Psi(H_1, H_2)$. In addition, if $w \in V_P \Psi_{(X_1, X_2)}(H_1, H_2)$, then there are paths $p_i(w)$ in $\Psi_{u_i}( H_i^{X_i})$, $i=1,2$, such that $(p_i(w))_- = u_i$, $(p_i(w))_+ = \tau_i(w)$, and $\ph(p_1(w)) = \ph(p_2(w))$.  Furthermore, it follows from the definitions that $
X_i \ph(q(\tau_i(w))) \ph(p_i(w))^{-1} \in H_i^{X_i}$,  $i=1,2$.
Therefore, there are words $V_i \in H_i$, $i=1,2$, such that  $X_i V_i  \ph(q(\tau_i(w))) =  \ph(p_i(w))$.
Since  $\ph(p_1(w)) = \ph(p_2(w))$, we further obtain
\begin{gather}\label{x12}
X_1^{-1} X_2 = V_1 \ph(q(\tau_1(w))) \ph(q(\tau_2(w)))^{-1} V_2^{-1} .
\end{gather}

Now we can draw the following conclusion. For every pair $(X_1, X_2) \in \FF \times \FF $ such that $H_1^{X_1} \cap H_2^{X_2} \ne \{ 1\}$ and a vertex
  $w \in V_P \Psi_{(X_1, X_2)}(H_1, H_2)$, there are words $V_i \in H_i$, $i=1,2$, such that the equality \eqref{x12} holds. Since the paths $q(\tau_i(w))$,  $i=1,2$,  in \eqref{x12} depend only on a connected component of  $\Psi(H_1, H_2)$, it follows from \eqref{x12}  that  if
  $(1, X), (1, Y)$ are some pairs such that $\Psi_{(1, X)}(H_1, H_2) =  \Psi_{(1, Y)}(H_1, H_2)$, then $X \in H_1 Y H_2 \subseteq \FF$.

Conversely, if $X \in H_1 Y H_2$, then the equality   $\Psi_{(1, X)}(H_1, H_2) =  \Psi_{(1, Y)}(H_1, H_2)$
is obviously true. Thus, the set $S(H_1, H_2)$ is in bijective correspondence with connected components of $\Psi(H_1, H_2)$ and, by Lemma~\ref{Lm2},  we have $\bar \rr(H_1 \cap s H_2 s^{-1}) = -\chi(\Psi_{(1, s)}(H_1, H_2))$ for every $s \in S(H_1, H_2)$. Adding up over all $s \in S(H_1, H_2)$, we arrive  to the required equality   $\bar \rr(H_1, H_2) =    -\chi(\Psi(H_1, H_2) )$.
\end{proof}

\section{Strongly positive words in free products of left ordered groups}

Recall that $G$ is called a {\em left ordered}  group
if $G$ is equipped with a total order $\le$ which is left invariant, i.e.,
for every triple $a, b, c \in G$,  the relation $a \le b$ implies $ca \le  cb$. If $G$ is left ordered, then $G$ can also be right ordered (and vice versa). Indeed, if $\le$  is a left order on $G$ then, setting $a \preceq b$ if and only if  $a^{-1} \le b^{-1}$, we obtain a right order $\preceq$ on $G$.

Let  $G_\al$, $\al \in I$,  be  nontrivial left (or right) ordered
groups and let $\FF = \prod_{\alpha \in I}^* G_\alpha$ be their  free product. Since it will be more convenient to work with left order, we assume that $G_\al$, $\al \in I$,  are left ordered. It is well known, see \cite{KopM}, and fairly easy to show   that there exists a total order $\preceq$ on  $\FF$ which extends the left orders on groups  $G_\al$, $\al \in I$,  and which turns  $\FF$ into  a left ordered  group.

A  reduced word $W \in \FF$  is called {\em positive} if $W \succ 1$. A  reduced  word $W$ is called {\em strongly positive}, denoted  $W \succs  1$,  if every nonempty suffix of $W$   is positive, i.e., if $W \equiv  W_1 W_2$ with $|W_2| >0$, then $W_2 \succ  1$. Clearly, a strongly positive word is positive.  Note if $U, W$ are strongly positive and  $U W$
is reduced, then   $UW \succs  1$.  A word $U$ is {\em (resp. strongly) negative}   if $U^{-1}$ is  (resp. strongly)  positive.

\begin{Lemma}\label{Lm4} Suppose $S$, $T$ are  strongly positive words
and the word  $S^{-1}T$ is reduced. Then $S^{-1}T$ is either strongly positive or strongly negative.
\end{Lemma}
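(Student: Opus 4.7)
The plan is to reduce the strong-positivity/negativity dichotomy for $S^{-1}T$ to a single comparison between $S$ and $T$ in the order $\preceq$ on $\FF$. Write $S\equiv s_1s_2\cdots s_m$ and $T\equiv t_1t_2\cdots t_n$ in reduced form, and set $P_j := s_1\cdots s_j$ for $1\le j\le m$. Since $S^{-1}T$ is reduced and nonempty, it represents a nontrivial element of $\FF$, so $S\ne T$ and exactly one of $S\prec T$, $T\prec S$ holds. These two cases are exchanged by passing from $S^{-1}T$ to $T^{-1}S = (S^{-1}T)^{-1}$ and swapping the roles of $S$ and $T$, so I may assume $S\prec T$ and aim to prove $S^{-1}T\succs 1$.

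The next step is to enumerate the nonempty suffixes of the reduced word $S^{-1}T$. Each such suffix either lies entirely inside the $T$-part, in which case it is a nonempty suffix of $T$ and is positive by the hypothesis $T\succs 1$, or it has the form $P_j^{-1}T$ for some $1\le j\le m$. The one delicate point is the case when $s_1$ and $t_1$ lie in a common factor $G_\alpha$ and collapse to a single syllable $s_1^{-1}t_1\ne 1$ of $S^{-1}T$; but the suffix of $S^{-1}T$ that begins at this merged syllable equals $s_1^{-1}T = P_1^{-1}T$, so it still falls under the uniform form $P_j^{-1}T$. Consequently, proving $S^{-1}T\succs 1$ reduces to verifying $P_j^{-1}T\succ 1$, i.e.\ $P_j\prec T$, for every $1\le j\le m$.

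Strong positivity of $S$ then supplies the required inequality: for every $j$ one has $P_j^{-1}S = s_{j+1}\cdots s_m\succeq 1$ (strict for $j<m$, with equality only when $j=m$), so $P_j\preceq S$, and combining with the working assumption $S\prec T$ yields $P_j\preceq S\prec T$, whence $P_j\prec T$. This completes the proof that $S^{-1}T\succs 1$, and the alternative case $T\prec S$ is handled by applying the same argument to $T^{-1}S$ with the roles of $S$ and $T$ interchanged, showing that $T^{-1}S$ is strongly positive, i.e.\ $S^{-1}T$ is strongly negative. The main piece of care required throughout is the suffix bookkeeping in the merged-syllable case, which is absorbed by the observation $s_1^{-1}T = P_1^{-1}T$; after that, only the order axioms and the definition of strong positivity are used.
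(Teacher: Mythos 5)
Your proof is correct and follows essentially the same route as the paper: reduce to the case $S \prec T$ (equivalently $S^{-1}T\succ 1$), note that every nonempty suffix of $S^{-1}T$ is either a nonempty suffix of $T$ (positive since $T\succs 1$) or of the form $S_1^{-1}T$ with $S\equiv S_1S_2$, and then use $S_1^{-1}T\succeq S_1^{-1}S=S_2\succ 1$ via left invariance. Your explicit treatment of the merged-syllable case is a slightly more careful bookkeeping of the same argument.
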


\begin{proof} Let  $S \equiv S_1 S_2$, $T \equiv T_1 T_2$, where $|S_2|,  |T_2| >0$. Then $S_2  \succ 1$,
$T_2 \succ 1$  by $S, T  \succs  1$. Since $S^{-1}T$ is reduced, we have $S^{-1}T \ne 1$, hence  $S^{-1}T  \succ 1 $ or  $S^{-1}T  \prec 1 $.
Assume  $S^{-1}T  \succ 1 $. Then   $T \succ S= S_1S_2 $ or $S_1^{-1}T  \succ S_2 \succ 1 $, hence, in view of $T_2 \succ 1 $, all nonempty suffixes of  $S^{-1}T$ are positive. This implies  $S^{-1}T \succs  1$. If $S^{-1}T  \prec 1 $, then, switching $S$ and $T$, we can show as above
that $T^{-1}S \succs  1$, hence   $S^{-1}T $ is strongly negative.
\end{proof}

\begin{Lemma}\label{Lm5} Suppose $W$ is a reduced word. Then there exists a factorization $W \equiv U_1 U_2^{-1}$ such that
$|U_1|, |U_2| \ge 0$ and each of $U_1, U_2$ is either empty or strongly positive.
\end{Lemma}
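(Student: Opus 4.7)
The plan is to translate the ``strongly positive'' condition into an order-theoretic property of the partial products of $W$, and then split $W$ at the partial product that is largest in $\preceq$. Write $W \equiv w_1 w_2 \cdots w_n$ with $w_i \in \A$, and set $P_k := w_1 w_2 \cdots w_k \in \FF$ for $0 \le k \le n$, with $P_0 := 1$. A direct computation using left-invariance of $\preceq$ shows that for the split $U_1 \equiv w_1 \cdots w_k$, $U_2 \equiv w_n^{-1} \cdots w_{k+1}^{-1}$, a nonempty suffix of $U_1$ equals $P_j^{-1} P_k$ for some $0 \le j < k$, while a nonempty suffix of $U_2$ equals $P_i^{-1} P_k$ for some $k < i \le n$. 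Hence both $U_1$ and $U_2$ are strongly positive (or empty) precisely when $P_k$ is the strict maximum of $\{P_0, P_1, \ldots, P_n\}$.

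The problem thus reduces to producing an index $k^*$ at which the sequence $P_0, P_1, \ldots, P_n$ attains its strict maximum. This is the single point where reducedness of $W$ enters: for $0 \le i < j \le n$ the element $P_i^{-1} P_j \equiv w_{i+1} \cdots w_j$ is a nonempty subword of the reduced word $W$, hence itself a nontrivial reduced word, hence nontrivial in $\FF$. Thus the $n+1$ partial products are pairwise distinct, and since $\preceq$ is a total order they admit a unique maximum. Splitting $W$ at that index yields the required factorization, with $U_1$ empty if the maximum occurs at $k^* = 0$ and $U_2$ empty if it occurs at $k^* = n$.

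I do not anticipate any real obstacle here. The one point to be careful about is tracking the inversion inside $U_2$: writing $U_2^{-1} \equiv w_{k+1} \cdots w_n$ and working with $P_i^{-1} P_k$ rather than with the raw suffix makes the bookkeeping transparent and avoids sign errors. The argument also gives uniqueness of $k^*$ as a by-product, although this is not claimed in the lemma.
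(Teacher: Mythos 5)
Your proof is correct, and it takes a genuinely different route from the paper. The paper argues by taking a factorization $W \equiv U_1^{\e_1}\cdots U_k^{\e_k}$ into strongly positive blocks $U_j$ with signs $\e_j=\pm1$, minimal with respect to $k$, and then uses the closure of strongly positive words under reduced concatenation together with Lemma~\ref{Lm4} (applied to a pattern $U_j^{-1}U_{j+1}$) to force $k\le 2$ with $\e_1=1$, $\e_2=-1$. You instead translate the condition directly: with $P_j:=w_1\cdots w_j$, left-invariance gives that a nonempty suffix of $U_1$ is $P_j^{-1}P_k$ ($j<k$) and a nonempty suffix of $U_2$ is $P_i^{-1}P_k$ ($i>k$), so the required factorization exists exactly at an index where $P_k$ strictly dominates all other partial products; reducedness of $W$ makes the $P_j$ pairwise distinct (each $P_i^{-1}P_j$ is a nonempty reduced word, hence nontrivial), so a strict maximum exists. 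Your bookkeeping for the suffixes of $U_2$ checks out, and the needed reducedness of $U_1$, $U_2$ and their suffixes is automatic since they are subwords (or inverses of subwords) of the reduced word $W$. What your approach buys is independence from Lemma~\ref{Lm4} and a uniqueness statement for the splitting index; it is also the same ``maximal partial product'' idea the paper itself deploys later in the proof of Lemma~\ref{Lm7} when locating the splitting vertex $v_M$ on the biinfinite path, so it fits the spirit of the argument. What the paper's route buys is economy within its own structure: Lemma~\ref{Lm4} is needed anyway for Lemma~\ref{Lm6}, so reusing it here costs nothing.
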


\begin{proof} Consider a factorization $W \equiv U_1^{\e_1} \dots  U_k^{\e_k}$, where, for every $j$, $U_j \succs 1$ and $\e_j = \pm 1$,
that would be minimal with respect to $k$. Since for every letter $a$ of $W$ either $a  \succs 1$ or $a^{-1}  \succs 1$, it follows that such a factorization exists and $k \le |W|$.

Note if $\e_j = \e_{j+1}=  1$, then $U_j U_{j+1}$ is reduced  and so
 $U_j U_{j+1}  \succs 1$. Similarly, if $\e_j = \e_{j+1}=  -1$, then $U_j^{-1} U_{j+1}^{-1} $ is reduced  and so
 $U_{j+1}  U_j \succs 1$. Hence, it follows from the minimality of $k$ that $\e_j \ne \e_{j+1}$ for all $j =1, \dots, k-1$. If now  $\e_j = -1$ and $\e_{j+1}=  1$ for some $j$, then we can use Lemma~\ref{Lm4} and conclude that   either  $U_j^{-1} U_{j+1}   \succs 1 $ or
 $U_{j+1}^{-1}  U_j  \succs 1$, contrary to minimality of $k$.
 Thus, it is proven that  either $k=1$ or $k=2$ and   $\e_1 = 1$, $\e_{2}=  -1$, as required.
\end{proof}

\begin{Lemma}\label{Lm6} Suppose $W$ is a cyclically reduced word. Then there exists a factorization $W \equiv W_1 W_2$ such that the cyclic permutation $\bar W \equiv W_2 W_1$ of $W$ is either strongly positive or strongly negative.
\end{Lemma}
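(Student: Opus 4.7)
The plan is to apply Lemma~\ref{Lm5} to $W$ to obtain a factorization $W \equiv U_1 U_2^{-1}$ in which $U_1$ and $U_2$ are each either empty or strongly positive, and then to show that (some cyclic permutation of) this factorization already witnesses the desired conclusion.

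First I would dispense with the degenerate cases: if $U_2$ is empty, then $W \equiv U_1$ is itself strongly positive and we take $W_1$ empty, $W_2 \equiv W$; symmetrically, if $U_1$ is empty, then $W \equiv U_2^{-1}$ is strongly negative and the same trivial factorization works. So assume both $U_1$ and $U_2$ are nonempty strongly positive words. Set $W_1 \equiv U_1$ and $W_2 \equiv U_2^{-1}$, so that the candidate cyclic permutation is $\bar W \equiv W_2 W_1 \equiv U_2^{-1} U_1$.

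The key geometric input is that $W$ is cyclically reduced, so $W^2 \equiv U_1 U_2^{-1} U_1 U_2^{-1}$ is a reduced word. Restricting to the subword formed by the last block $U_2^{-1}$ of the first copy of $W$ and the first block $U_1$ of the second copy shows that the concatenation $U_2^{-1} U_1$ is reduced (the last letter of $U_2^{-1}$, which is the inverse of the first letter of $U_2$, lies in a different free factor from the first letter of $U_1$, precisely because $W^2$ is reduced at the seam where the two copies of $W$ meet). Now I can apply Lemma~\ref{Lm4} with $S \equiv U_2$ and $T \equiv U_1$, both strongly positive: it tells me that the reduced word $S^{-1} T \equiv U_2^{-1} U_1 \equiv \bar W$ is either strongly positive or strongly negative, which is the required conclusion.

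I do not expect any real obstacle: the only nonroutine point is checking that $U_2^{-1}U_1$ is actually a reduced word, and this follows directly from cyclic reducedness of $W$. Everything else is bookkeeping around Lemmas~\ref{Lm4} and~\ref{Lm5}.
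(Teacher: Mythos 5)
Your proposal is correct and follows essentially the same route as the paper's own proof: apply Lemma~\ref{Lm5} to write $W \equiv U_1 U_2^{-1}$, observe that cyclic reducedness of $W$ makes $U_2^{-1}U_1$ reduced, and invoke Lemma~\ref{Lm4}. Your explicit treatment of the degenerate cases and of the seam in $W^2$ is just a more detailed spelling-out of what the paper leaves implicit.
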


\begin{proof} By Lemma~\ref{Lm5},   $W \equiv U_1 U_2^{-1}$, where  $|U_1|, |U_2| \ge 0$ and $U_j \succs 1 $ if  $|U_j| > 0$, $j=1,2$.  Since $W^2$ is reduced,  $U_2^{-1} U_1 $ is reduced and, by  Lemma~\ref{Lm4}, either $ U_2^{-1} U_1 \succs 1 $ or $ U_2^{-1} U_1\precs 1 $. Hence, $\bar W \equiv U_2^{ -1} U_1 $ is a desired cyclic permutation of $W$.
\end{proof}

\section{Proving Theorem~\ref{th1}}

Let  $G_\al$, $\al \in I$, be  nontrivial
left (or right) ordered
groups and  let $\FF $ be  their  free product equipped with a {\em left} order $\preceq$.
Also, fix a total order $\le$  on the index set $I$.

Let  $\Psi$ be a finite irreducible $\A$-labeled graph, where $\A = \cup_{\al  \in I}G_\al$.  An edge $e \in E \Psi$ is called {\em maximal} if there are reduced infinite paths $p=p(e)= e_1e_2 \dots $, $q=q(e)= f_1f_2 \dots$ in $\Psi$, where $e_j, f_j \in E\Psi$,  such that  $e = e_1$, $(e_1)_- = (f_1)_-$ is primary, $\btt(e_1) > \btt(f_1)$, and,  for every $j \ge 1$, both  $\ph(e_1\dots e_{2j})  \prec 1$ and   $\ph(f_1\dots f_{2j})  \prec~1$. Note that the vertices $(e_1\dots e_{2j})_+$,  $(f_1\dots f_{2j})_+$ are primary and $q^{-1} p = \dots f_2^{-1} f_1^{-1} e_1 e_2 \dots  $ is a reduced {biinfinite} path.

\begin{Lemma}\label{Lm7} Suppose $\Psi$ is a finite  irreducible  $\A$-labeled graph whose Euler characteristic is negative, $\chi(\Psi) <0$.   Then
$\Psi$ contains a maximal edge.
\end{Lemma}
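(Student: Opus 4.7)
My plan is to pass to the core of $\Psi$ and combine Lemma~\ref{Lm6} with left-invariance of $\preceq$. Deleting a leaf vertex together with its incident edge changes $|V|$ by $-1$ and $|E|$ by $-2$, so $\chi(\Psi)=|V|-\tfrac{1}{2}|E|$ is preserved; the core still has $\chi<0$, and any maximal edge in the core is a maximal edge of $\Psi$. Hence I may assume every vertex of $\Psi$ has degree at least $2$ and every edge lies on a cyclically reduced closed path, so every finite reduced path extends to a reduced infinite path in either direction.

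The central observation is that a reduced word $W$ is strongly negative, $W\precs 1$, exactly when every nonempty prefix of $W$ is $\prec 1$: unpacking the definition, $W\precs 1$ means $W^{-1}\succs 1$, i.e.\ every nonempty suffix of $W^{-1}$ is $\succ 1$, i.e.\ every nonempty prefix of $W$ is $\prec 1$. So the condition $\ph(e_1\dots e_{2j})\prec 1$ for every $j\ge 1$ is precisely a prefix-strongly-negative condition on the even-length prefixes of the infinite label. Given a cyclically reduced closed path $c$ based at a primary vertex, Lemma~\ref{Lm6} provides a cyclic permutation $\bar c$ with $\ph(\bar c)$ strongly positive or strongly negative; after possibly replacing $c$ by $c^{-1}$, assume $\ph(\bar c)\precs 1$, based at a primary vertex $w'$. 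Iterating, the reduced infinite path $\bar c\bar c\bar c\dots$ at $w'$ has at length $2j=2(nk+r)$, with $k=|\bar c|/2$, label $\ph(\bar c)^n\cdot P_r$, where $P_r$ is a prefix of $\ph(\bar c)$. Induction and left-invariance give $\ph(\bar c)^n\prec 1$, and strong negativity gives $P_r\prec 1$ when nonempty, whence $\ph(\bar c)^n P_r\prec\ph(\bar c)^n\prec 1$. So the first edge of $\bar c$ begins a property-$(\star)$ infinite ray at $w'$.

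Because $\chi(\Psi)<0$, some component of the core has free fundamental group of rank at least $2$, so there exist two independent cyclically reduced closed paths $c_1,c_2$ at a common primary base vertex with distinct initial edges, which by (P1) have distinct types. Applying the construction of the previous paragraph to each $c_i$ produces two property-$(\star)$ rays; ordering their first edges by type yields the data $e_1,f_1$ with $\btt(e_1)>\btt(f_1)$ required for a maximal edge, and the reduced biinfinite path $q^{-1}p$ follows from the distinct-type condition.

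The main obstacle is the final step: the cyclic shifts supplied by Lemma~\ref{Lm6} may move the base vertex of each $c_i$ to different primary vertices, so the two rays need not share a base a priori. I would resolve this either by choosing $c_1,c_2$ in a coordinated way (so that the two Lemma~\ref{Lm6}-shifts land at the same vertex, for instance by combining them into a single cyclically reduced figure-eight based at a chosen vertex before applying Lemma~\ref{Lm6}), or by arguing the contrapositive: if no maximal edge exists then at each primary vertex at most one outgoing edge can begin a property-$(\star)$ ray (by (P1), two such edges at the same vertex automatically have distinct types), and a local counting at each secondary vertex upgrades this to the bound $|E|\le 2|V|$, contradicting $\chi(\Psi)<0$. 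This base-vertex bookkeeping, rather than the order-theoretic input which is fully captured by Lemma~\ref{Lm6} and left-invariance, is the technical heart of the argument.
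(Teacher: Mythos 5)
You have the right reduction (pass to the core, note that $W\precs 1$ is equivalent to all nonempty prefixes of $W$ being $\prec 1$, and use Lemma~\ref{Lm6} to make each cycle spiral with all even-length prefix labels $\prec 1$), and you have correctly located the real difficulty: Lemma~\ref{Lm6} re-bases each cycle at a vertex you do not control, so you must still produce \emph{two} such rays emanating from a \emph{common} primary vertex. But neither of your proposed fixes closes this gap. The figure-eight idea fails because a strongly negative cyclic permutation of $\ph(c_1c_2)$, based at some vertex $w'$, gives only the \emph{forward} ray $\overline{c_1c_2}^{\,\infty}$ the required property: the backward ray out of $w'$ has prefix labels $S^{-1}$ where $S$ is a suffix of a power of $V=\ph(\overline{c_1c_2})$, and $V\precs 1$ controls the prefixes of $V$, not its suffixes, so these backward labels need not be $\prec 1$. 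The counting alternative is not substantiated either: knowing that each primary vertex carries at most one edge starting a property-$(\star)$ ray bounds the number of such special edges, but says nothing about the total degree of that vertex (it may have many outgoing edges of which none, or one, starts such a ray), so no inequality of the form $|E|\le 2|V|$ follows, and you give no mechanism at secondary vertices that would produce one.

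The missing idea, which is the actual content of the paper's proof, is a maximization over a single biinfinite path. One builds a reduced biinfinite path $U^{-1}T$ through a common base vertex $o$ whose two ends eventually wind around cycles $r_0$, $s_0$ with $\ph(\bar r)^{-1},\ph(\bar s)^{-1}\succs 1$; the periodic decrease $\ph(T(1,2j))\prec\ph(T(1,2j-m_r))$ (and likewise for $U$) shows that the set of labels of initial segments from $o$ to primary vertices in both directions is dominated by a finite subset, hence attains a maximum at some primary vertex $v_M$. Re-basing at $v_M$ amounts to left multiplication by $\ph(h(o,v_M))^{-1}$, which by left invariance preserves the order, so from $v_M$ \emph{both} directions along $U^{-1}T$ have all even-length prefix labels $\prec 1$; the two initial edges are distinct, hence of distinct types by (P1), and the one of larger type is maximal. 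Without this (or an equivalent device for forcing the two rays to share a base vertex), your argument does not go through.
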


\begin{proof} Since $\chi(\Psi) <0$, $\Psi $ has a connected component $\Psi_1$ with $\chi(\Psi_1) <0$. Without loss of generality, we may assume that $\mbox{core}\, (\Psi_1) =  \Psi_1$.
It is not difficult to see from $\chi(\Psi_1) <0$ and from $\mbox{core}\, (\Psi_1) =  \Psi_1$  that, for every pair $h, h' \in E\Psi_1$, there is a reduced path $p = h \dots h'$  whose first, last edges are $h, h'$, resp..  Pick a primary vertex $o$ in  $\Psi_1$ and two distinct edges $t_1, u_1$ with $(t_1)_- = (u_1)_- = o$.  Let $t, u$ be some reduced paths such that first edges of $t$, $u$ are $t_1, u_1$, resp., and $t_+$, $u_+$ have degree $>2$. Then it follows from the above remark that there are closed paths
$r_0, s_0$ starting at   $t_+$, $u_+$, resp., such that the path  $t r_0^2 t^{-1} u s_0^2 u^{-1}$ is reduced.
Since $\Psi_1$ is irreducible and $r_0, s_0$ are reduced, it follows  $\ph(r_0) \ne 1$, $\ph(s_0) \ne 1$ in $\FF$.

Let  $r, s$ be some cyclic permutations of the closed paths $r_0, s_0$, resp., that start at some primary vertices and $R = \ph(r)$, $S = \ph(s)$ be reduced words. Clearly, $R, S$ are cyclically reduced and $|R|  = |r|/2 >1$,  $|S|  = |s|/2 >1$. By Lemma~\ref{Lm6}, there are cyclic permutations  $\bar R, \bar S$  of $R, S$, resp., such that $\bar R^{\e_r}, \bar S^{\e_s}$ are strongly positive, where $\e_r, \e_s \in \{ \pm 1\}$. Switching from $r_0$, $s_0$ to $r_0^{-1}$, $s_0^{-1}$, resp., if necessary, we may assume that $\e_r= \e_s=-1$, i.e.,
$\bar R^{-1}, \bar S^{-1} \succs 1$.
Let $\bar r$, $\bar s$ denote
 cyclic permutations of $r, s$, resp., such that $\ph( \bar r) = \bar R$, $\ph( \bar s) = \bar S$.  Also, let $\bar r =\bar r_1 \bar r_2$,  $\bar s =\bar s_1 \bar s_2$ be factorizations of   $\bar r$, $\bar s$, resp., defined by vertices $t_+$,  $u_+$, resp..

Consider two infinite paths starting at $o = t_- = u_-$ and defined as follows. Let $T = t r_0^{+\infty }$ whose prefixes are $t r_0^{k}$, $k \ge 0$,  and $U = u  s_0^{+\infty }$ whose prefixes are $u  s_0^{\ell}$, $\ell \ge 0$. It follows from the definitions that
 $T$ starts at $t_- = o$, goes along $t$ to $t_+$ and then cycles
 around $r_0$ infinitely many times, in particular, $T$ is reduced. Similarly, $U$ starts at $u_- = o$, goes along $u$ to $u_+$ and then cycles
 around $s_0$.

 Denote  $T = t_1 t_2 \dots $, where $t_j \in E \Psi_1$, and $U = u_1 u_2 \dots $, where $u_k \in E \Psi_1$. Let $T(j_1, j_2) := t_{j_1}\dots t_{j_2}$, where $j_1 \le j_2$, denote the subpath of $T$ that starts at $(t_{j_1})_-$ and ends in $(t_{j_2})_+$. It is convenient to set $T(j,j-1) := \{  (t_j)_- \} $ for all $j \ge 1$.
Similarly,  $U(j_1, j_2) := u_{j_1} \dots u_{j_2}$, where  $j_1 \le j_2$, and  $U(j,j-1) := \{  (u_j)_- \} $ if  $j \ge 1$.

Suppose $2j > |t| + |\bar r_2|$. Then $2j - |t| - |\bar r_2|> 0$. Let
$m$ be the remainder of $2j - |t| - |\bar r_2|$ when divided by
$ |r|$. Set $m_r := m$ if $m >0$ and $m_r :=  | r|$ if $m=0$.
 Note  $T(1, 2j) = T(1, 2j-m_r)  T(2j - m_r +1, 2j)$ and $\ph(  T(2j - m_r +1, 2j) ) \equiv  \bar R_3$, where   $\bar R_3$  is a prefix of
 $\ph(\bar r)  \equiv \bar R_3 \bar R_4$ of even length $m_r>0$.
 Recall $  \ph(\bar r) =  \bar R$ and $\bar R^{-1} \succs 1$,
hence $\bar R_3^{-1} \succ 1 $ and $\bar R_3 \prec 1$. Note
$\bar R_3 =  \ph(  T(2j - m_r +1, 2j) ) \prec 1$ implies, by left invariance of  the order $\preceq $, that
\begin{equation}\label{T1}
  \ph(T(1, 2j) ) \prec \ph(T(1, 2j-m_r) )    .
\end{equation}

Now suppose $2j > |u| + |\bar s_2|$.  Let
$m'$ be the remainder of $2j - |t| - |\bar s_2|>0$ when divided by
$ |s|$. Set $m_s := m'$ if $m' >0$ and $m_s :=  | s|$ if $m'=0$. Then we can derive from $\bar S^{-1} \succs 1$,  similar to \eqref{T1}, that
\begin{equation}\label{U1}
  \ph(U(1, 2j) ) \prec   \ph(U(1, 2j-m_s) )    .
\end{equation}

The comparisons \eqref{T1}--\eqref{U1} prove that a maximal element of the infinite set
\begin{equation}\label{set}
  \{   \ph(T(1, 2j) ),  \ph(U(1, 2k) )  \mid   j \ge 0, k \ge 1 \} \subseteq \FF
\end{equation}
exists and it is the maximal  element of the finite set
\begin{equation*}
\{   \ \ph(T(1, 2j) ),  \ph(U(1, 2k) )  \mid  0   \le  2j  \le |t| + |\bar r_2|,   0 <   2k  \le  |u| + |\bar s_2| \ \} \subset \FF  .
\end{equation*}

Let $\ph(Q(1, 2j_M))$, where $j_M \ge 0$, $Q \in \{ T, U \}$,  denote the maximal element of the set \eqref{set}. Note $v_M = Q(1, 2j_M)_+$ is primary. Observe that elements $\ph(T(1, 2j) )$,  $\ph(U(1, 2k) )$, $j \ge 0$, $k \ge 1$, in   \eqref{set}  are distinct and represent $\ph$-labels of subpaths of the biinfinite path $U^{-1}T$ that connect the primary vertex $o = t_-$ to all  primary vertices of $U^{-1}T$ along $U^{-1}T$ (or its inverse). If we take another primary vertex $v$ on $U^{-1}T$  and consider the set of labels of subpaths that connect $v$ to  primary vertices of $U^{-1}T$ as above, then the resulting set can be obtained from \eqref{set}   by multiplication on the left by $\ph(h(o,v))^{-1}$, where $h(o,v)= T(1, 2j_v)$ if $v = T(1, 2j_v)_+$, $2j_v \ge 0$, and  $h(o,v)= U(1, 2k_v)$ if $v = U(1, 2k_v)_+$, $2k_v > 0$. Since left multiplication preserves the order, these remarks imply that the vertex $v_M = Q(1, 2j_M)_+$ defines a factorization of the  biinfinite path $U^{-1}T = q^{-1}p $ into infinite paths $q$, $p$, where $p =e_1e_2 \dots$,  $q =f_1f_2 \dots$, $e_j, f_j$ are edges, $j \ge 1$, so that, for every $j \ge 1$, we have $\ph(e_1e_2 \dots e_{2j}) \prec 1$ and $\ph(f_1f_2 \dots f_{2j}) \prec 1$. Therefore, if $\btt(e_1) > \btt(f_1)$, then $e_1$ is a maximal edge of $\Psi$ and if $\btt(f_1) > \btt(e_1)$, then $f_1$ is  maximal in  $\Psi$.
\end{proof}

Suppose $\Gamma$ is a finite graph. A set  $D \subseteq E \Gamma$  of edges is called {\em good} (for cutting) if the graph $ \Gamma \setminus (D \cup D^{-1})$  consists of connected components whose Euler characteristics are 0. Clearly,  $\Gamma$  contains a good edge set if and only if no connected component of $\Gamma$ is a tree.

\begin{Lemma}\label{Lm8} Suppose $\Psi$ is a finite connected irreducible $\A$-graph with $\chi(\Psi) <0$.   Then the set of all maximal edges of $\Psi$ is good (for cutting).
\end{Lemma}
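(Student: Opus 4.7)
The plan is to argue by contradiction. Suppose the set $D$ of all maximal edges of $\Psi$ is not good. Then the graph $\Psi' := \Psi \setminus (D \cup D^{-1})$ has some connected component $C$ with $\chi(C) \neq 0$. Because $C$ is connected, $\chi(C) \le 1$, so two subcases arise: $\chi(C) < 0$ and $\chi(C) = 1$ (i.e.\ $C$ is a tree).

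The first subcase, $\chi(C) < 0$, is handled by Lemma~\ref{Lm7}. Pass to the core $C' := \mathrm{core}(C)$; iteratively deleting a degree-$1$ vertex together with its incident edge does not change the Euler characteristic, so $\chi(C') = \chi(C) < 0$. As a subgraph of $\Psi$, $C'$ inherits properties (P1) and (P2), and by construction has no vertices of degree $1$, so $C'$ is a finite connected irreducible $\A$-graph. Lemma~\ref{Lm7} applied to $C'$ yields a maximal edge $e \in E(C')$, witnessed by two reduced infinite paths $p,q$ in $C'$. Since $C' \subseteq \Psi$, these paths are also reduced infinite paths in $\Psi$, so $e$ is maximal in the ambient graph $\Psi$ as well, and thus $e \in D$. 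This contradicts $e \in E(C') \subseteq E(\Psi) \setminus (D \cup D^{-1})$.

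The second subcase, $\chi(C) = 1$, is the delicate one. The idea is to use the structure of maximal edges: at each primary vertex $v$, let $N(v)$ be the set of edges $e$ with $e_- = v$ that start a reduced infinite path whose every even-length prefix has label $\prec 1$. Property (P1) forces the types $\btt(e)$ to be distinct on $N(v)$, so when $N(v) \ne \emptyset$ there is a unique edge $e_0(v) \in N(v)$ of minimum type; the edges of $N(v) \setminus \{e_0(v)\}$ are precisely the maximal edges of $\Psi$ originating at $v$, while $e_0(v)$ survives in $\Psi'$. To rule out a tree component $C$, start at a primary vertex $v \in V_P C$, extend along $e_0(v)$ into $\Psi$, and choose at each subsequent primary vertex the minimum-type edge of the corresponding $N$-set, thereby propagating an infinite reduced $\prec 1$-path whose edges lie entirely in $\Psi'$. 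Finiteness of $\Psi$ forces this trajectory to be eventually periodic, producing a cycle that lies in $\Psi'$ and hence in the single component $C$ of $v$, contradicting the tree hypothesis.

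The main obstacle is this second subcase: it is not immediate from Lemma~\ref{Lm7} that the minimum-type propagation actually keeps the trajectory inside $\Psi'$, because a later edge on the $\prec 1$-path (viewed from its primary endpoint) might itself be a maximal edge and hence be removed. Pushing through this step presumably requires the order properties together with Lemma~\ref{Lm6}, so that the strongly negative factorization of the label along the trajectory, combined with the minimality of the type chosen at each vertex, certifies that each edge traversed is the $e_0$-edge at its primary endpoint and therefore avoids $D \cup D^{-1}$.
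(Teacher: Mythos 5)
Your first subcase ($\chi<0$) is exactly the paper's argument and is fine, as is the preliminary observation that, by (P1) and (P3), distinct edges at a primary vertex $v$ have distinct types, so the maximal edges issuing from $v$ are precisely $N(v)\setminus\{e_0(v)\}$. The problem is the tree subcase, which is the real content of the lemma, and there your proof has a genuine gap that you yourself flag but do not close. The propagation fails as described for two concrete reasons. First, a ray $e_1e_2e_3\dots$ witnessing $e_0(v)\in N(v)$, with all $\ph(e_1\cdots e_{2j})\prec 1$, does not rebase at the next primary vertex $v'=(e_2)_+$: left invariance only gives $\ph(e_3\cdots e_{2j})\prec \ph(e_1e_2)^{-1}$, and $\ph(e_1e_2)^{-1}\succ 1$, so nothing forces $\ph(e_3\cdots e_{2j})\prec 1$; hence the tail need not certify membership in $N(v')$, and you never argue that $N(v')$ (or even $N(v)$ for $v$ in the tree component) is nonempty. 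Second --- the obstacle you name --- the edge $e_2$ you traverse to reach $v'$ has $e_2^{-1}$ issuing from the primary vertex $v'$, and nothing prevents $e_2^{-1}$ from being maximal; if it is, your trajectory leaves $\Psi\setminus(D\cup D^{-1})$ already at its second edge. ``Presumably requires the order properties together with Lemma~\ref{Lm6}'' is not an argument, and it is not clear how to certify that each traversed edge is the $e_0$-edge at its primary endpoint without essentially a new proof.

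For comparison, the paper resolves the tree case by a different mechanism that works from the outside of $T$ inward rather than threading a surviving path through $T$. It takes the set $C$ of edges of $\Psi$ that terminate in the tree $T$ but do not lie in $T$, observes that each such edge lies in $D\cup D^{-1}$, and uses the witnessing rays of the corresponding maximal edges to define a successor map $\sigma\colon C\to C$ together with connecting paths $h(c)$ from the primary endpoint of $c$ to that of $\sigma(c)$ satisfying $\ph(h(c))\preceq 1$, where equality forces $h(c)$ to be a single vertex and $\btt(c)>\btt(\sigma(c))$. A $\sigma$-cycle then yields a closed path whose label is $1$ because its portion inside $T$ is a closed path in a tree; hence all the inequalities are equalities and the types strictly decrease around the cycle, a contradiction. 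This sidesteps precisely the rebasing and ``backward maximal edge'' difficulties that your propagation runs into, so you should either adopt that route or supply a genuinely new argument for the step you left as ``presumably.''
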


\begin{proof} Arguing on the contrary, assume that $D \subseteq E\Psi$ is the set of
all maximal edges of $\Psi$ and $D$ is not good. Then the graph $\Psi  \setminus (D \cup D^{-1})$ contains a connected component  $\Psi'_1$ with either $\chi(\Psi'_1) < 0$ or  $\chi(\Psi'_1) > 0$.
If  $\chi(\Psi'_1) < 0$, then the core $\Psi_1=\mbox{core}(\Psi'_1)$ of $\Psi'_1$  is a finite  irreducible $\A$-graph with  $\chi(\Psi_1) < 0$. By Lemma~\ref{Lm7},  $\Psi_1$ contains a maximal edge $e$.  However, it follows from the definition that $e$ is also maximal for $\Psi$, hence, $e \in D$. This contradiction shows that $\chi(\Psi'_1) > 0$, hence  $\Psi'_1$ is a tree which we denote $T$.

Let $C$ denote the set that consists of all $c \in E \Psi$ such that $c_+ \in VT$ and $c \not\in ET$. It follows
from the definitions that if $c \in C$ then $c \in D$ or $c^{-1} \in D$.  Since every $d \in D$
is maximal, there are infinite reduced  paths $p(d)= e_1(d) e_2(d) \dots$ and  $q(d)= f_1(d) f_2(d) \dots$ such that $e_1(d)_- = f_1(d)_- \in V_P \Psi$, $e_1(d) = d$,  $\btt(e_1(d)) >  \btt(f_1(d))$ and, for every $j \ge 1$, $\ph( e_1(d) \dots  e_{2j}(d)) \prec 1$ and  $\ph( f_1(d) \dots  f_{2j}(d)) \prec 1$.

Pick an arbitrary $c \in C$. Suppose $c_-$ is primary. Since $d_- \in V_P\Psi$ if $d \in D$  and $c $ or $c^{-1}$ is in $D$, it follows that $c \in D$. Consider a shortest path of the form $h(c) := e_1(c) \dots e_{2\ell}(c)$, $\ell \ge 1$, such that either $e_{2\ell}(c)^{-1} \in C$ or $e_{2\ell}(c)^{-1} \in T$ and $e_{2\ell+1}(c)^{-1} \in C$. Define $\sigma(c):= e_{2\ell}(c)^{-1}$ if  $e_{2\ell}(c)^{-1} \in C$ and $\sigma(c):= e_{2\ell+1}(c)^{-1}$ if $e_{2\ell+1}(c)^{-1} \in C$. Since $T$ is a finite tree, such a path $h(c)$ exists, $|h(c)| >0$,  $\sigma(c) \ne c$, and $\ph(h(c)) \prec 1$. Note $h(c)_-$, $h(c)_+$ are primary vertices of $c$, $\sigma(c)$, resp., and
$h(c) = c h_T(c) \sigma(c)^{-\e_{\sigma(c)}}$,  where $h_T(c)$ is a subpath of $h(c)$ in $T$ with
$h_T(c)_- = c_+$, $h_T(c)_+ = \sigma(c)_+$, and $\e_{\sigma(c)}= 1$ if $\sigma(c)_+$ is secondary and  $\e_{\sigma(c)}= 0$ if $\sigma(c)_+$ is primary.

 Now assume that  $c_+$ is primary. Then $c^{-1} =d_c \in D$. Consider a shortest path of the form $h(c) := f_1(d_c) \dots f_{2\ell-2}(d_c)$, where $\ell \ge 1$  and if $\ell = 1$ then $h(c) := \{ c_+ \}$, such that either $f_{2\ell-2}(d_c)^{-1} \in C$
 or $f_{2\ell-2}(d_c)^{-1} \in T$ (or $f_{2\ell-2}(d_c)$ is undefined if $\ell = 1$) and  $f_{2\ell-1}(d_c)^{-1} \in C$.  Define $\sigma(c):= f_{2\ell-2}(d_c)^{-1}$ if  $f_{2\ell-2}(d_c)^{-1} \in C$ and $\sigma(c):= f_{2\ell-1}(d_c)^{-1}$ if   $f_{2\ell-1}(d_c)^{-1} \in C$. Since $T$ is a finite tree, such a path $h(c)$ exists,  $|h(c)| \ge 0$,  $\sigma(c) \ne c$,   and $\ph(h(c)) \preceq 1$. In addition,  the equality $\ph(h(c)) = 1$ implies that $h(c) = \{ c_+ \}$,
 $\sigma(c) = f_{1}(c)^{-1}$ and $\btt(\sigma(c)) =  \btt(f_1(d_c))   <  \btt(e_1(d_c)) = \btt(c)$.
 As above, we remark that $h(c)_-$, $h(c)_+$ are primary vertices of $c$, $\sigma(c)$, resp., and
$h(c) =  h_T(c) \sigma(c)^{-\e_{\sigma(c)}}$,  where $h_T(c)$ is  in $T$ with
 $h_T(c)_- = c_+$, $h_T(c)_+ = \sigma(c)_+$, and $\e_{\sigma(c)}= 1$ if $\sigma(c)_+$ is secondary
 and  $\e_{\sigma(c)}= 0$ if $\sigma(c)_+$ is primary.

 Let us summarize. For every $c \in C$, we have defined an edge $\sigma(c) \in C$,  $\sigma(c) \ne c$,  hence,   $\sigma : C \to C$ is a function. Furthermore, there is a reduced path $h(c)$ such that $h(c) = c^{\e_c}h_T(c) \sigma(c)^{-\e_{\sigma(c)}}$,  where $h_T(c)$ is in $T$ with
 $h_T(c)_- = c_+$, $h_T(c)_+ = \sigma(c)_+$,  $\e_c =1$ if $c_+ \in V_S\Psi$ and $\e_c =0$ if $c_+  \in V_P\Psi$. In addition,  $\e_{\sigma(c)}= 1$ if $\sigma(c)_+  \in V_S\Psi$
 and  $\e_{\sigma(c)} = 0$ if $\sigma(c)_+  \in V_P\Psi$. Also,  $\ph(h(c)) \preceq 1$ and  $\ph(h(c)) = 1$ implies  that $h(c) = \{ c_+ \} = \{ \sigma(c)_+ \} $ and
 $\btt(c) > \btt(\sigma(c)) $. Finally, $h(c)_-$,   $h(c)_+$ are primary vertices of $c$, $\sigma(c)$, resp., whence  $h(c)_+ = h(\sigma(c))_-$ for every $c \in C$.

 Since $C$ is finite,  there is a cycle $c, \sigma(c), \dots, \sigma^k(c) =c$, $k \ge 2$, for some $c \in C$. Consider the closed path $p_c = h(c) h(\sigma(c)) \dots h(\sigma^{k-1}(c))$. Since
 $\ph(h(\sigma^{j}(c)))  \preceq 1$ for every $j$, we  obtain that $\ph(p_c)  \preceq 1$ and the equality $\ph(p_c) =1$ implies $\ph( h(\sigma^{j}(c)) ) =1$ and   $h(\sigma^{j}(c)) =   \{ \sigma^j(c)_+ \} $ for every $j$.
  On the other hand,
\begin{align*} p_c & = c^{\e_c} h_T(c) \sigma(c)^{-\e_{\sigma(c)}}  \sigma(c)^{\e_{\sigma(c)}}   h_T(\sigma(c)) \sigma^2(c)^{-\e_{\sigma^{2}(c)}} \ldots \\ &  \sigma^{k-1}(c)^{\e_{\sigma^{k-1}(c)}}   h_T(\sigma^{k-1}(c)) \sigma^k(c)^{-\e_{\sigma^{k}(c)}}  = c^{\e_c} h_T(c)   h_T(\sigma(c)) \ldots   h_T(\sigma^{k-1}(c))  c^{-\e_c}
 \end{align*}
following from $\sigma^{k}(c) = c$. Since $ h_T(c)   h_T(\sigma(c)) \dots   h_T(\sigma^{k-1}(c)) $ is a closed path in the tree  $T$, we have $\ph(p_c) =1$ in $\FF$. Therefore,
$h(\sigma^j(c)) = \{ \sigma^j(c)_+ \} = \{ \sigma(c)_+ \} $ and
 $\btt(\sigma^{j}(c)) > \btt(\sigma^{j+1}(c))$ for every $j =0, 1, \dots, k-1$, implying
$\btt(c) > \btt(\sigma^{k}(c)) =  \btt(c)$. This contradiction completes the proof. \end{proof}

{\em Proof of Theorem~\ref{th1}.} As in Sect. 2, consider a finite irreducible $\A$-graph $\Psi(H_1, H_2)$ whose connected components correspond to core graphs of subgroups $H_1 \cap s H_2 s^{-1}$, $s \in S(H_1, H_2)$. Without loss of generality,
we may assume that  $-\chi( \Psi(H_1, H_2) ) >0$.
Let $D$ be the set of all maximal edges in $\Psi(H_1, H_2)$. It is easy to see  from the definitions that if $d \in D$, then $\tau_i(d)$ is maximal in
$\Psi_{o_i}(H_i)$, $i=1,2$. Hence, $\tau_i(D) \subseteq D_i$, where $D_i$  is the set of  maximal edges of $\Psi_{o_i}(H_i)$, $i=1,2$. By Lemma~\ref{Lm8},  $D_i$ is good for $\Psi_{o_i}(H_i)$ and it follows from  Lemma~\ref{Lm3} and definitions that
$$
\bar \rr(H_1, H_2 )= |D| \le  |\tau_1(D)|\cdot |\tau_2(D)|  \le    |D_1|\cdot |D_2| =  \bar \rr(H_1)  \cdot \bar \rr(H_2)  ,
$$
as desired. \qed

\section{The Free Group Case}

Suppose $H_1$, $H_2$ are finitely generated subgroups of a free group $F = F(\A)$, where $\A= \{ a_1, \dots, a_m \}$ is a set of free generators of $F$. Let  $F(a, b)$ be a free group of rank 2 with free generators $a, b$.
 Note that the map $\mu : a_i \to a^i b^i a^{-i} b^{-i}$, $i =1, \dots, m$, extends to a monomorphism $\mu : F(\A) \to F(a, b)$ such  that $\mu(H_1)$, $\mu(H_2)$ are factor-free subgroups of the free product $F(a, b) = A *B$, where $A = \langle a \rangle$, $B = \langle b \rangle$ are infinite cyclic groups generated by $a, b$.  We may assume that $\mu(S(H_1, H_2)) \subseteq  S(\mu(H_1), \mu(H_2))$. Since a cyclic  group is left ordered, it follows from Theorem~\ref{th1} that
\begin{align}\notag
\bar \rr(H_1, H_2) & = \hskip-2mm \sum_{s \in S(H_1, H_2) } \hskip-2mm  \bar  \rr(H_1 \cap  s H_2{s^{-1}})  \le \hskip-4mm
 \sum_{t \in S(\mu(H_1), \mu(H_2)) } \hskip-4mm \bar  \rr(\mu(H_1) \cap t\mu(H_2){t^{-1}})  \\
  \label{SHN}  & \le \bar  \rr(\mu(H_1)) \bar \rr(\mu(H_2)) =\bar  \rr(H_1)\bar \rr(H_2)  .
\end{align}

 We remark that there is a more direct way to prove the inequality \eqref{SHN} by repeating verbatim the arguments of Sect.~4  with a few changes in basic definitions. To do this, consider a graph $U$ with $VU = \{ o_P, o_S \}$ and $EU = \{ a_1^{\pm 1}, a_2^{\pm 1},  a_3^{\pm 1} \}$, where $(a_j)_- = o_P$ and $(a_j)_+ = o_S$, $j =1,2,3$. The fundamental group $F_2 = \pi_1(U, o_P)$ of $U$ at $o_P$ is free of rank 2, and $F_2 =  \langle a_1 a_2^{-1} , a_1 a_3^{-1} \rangle$ is freely generated by $a_1 a_2^{-1}, a_1 a_3^{-1}$.  Let $H_1$, $H_2$  be finitely generated subgroups of $F_2$, $X_i$ be the Stallings graph of $H_i$, $i=1,2$, and $W$ be the core of the  pullback  $ X_1 \underset U \times X_2$   of $X_1$, $X_2$ over $U$, see \cite{St}. If $Q \in \{ X_1, X_2, W, U \}$, there is a canonical graph map $\ph_Q: Q \to U$ which is locally injective and which we call {\em labeling}. If $v \in VQ$ and $\ph_Q(v) = o_P$, $v$ is  called {\em primary}.  If $\ph_Q(v) = o_S$, $v$ is  {\em secondary}. The image  $\ph_Q(e) = a_j^{\pm 1}$ is the {\em label} of an edge $e \in EQ$ and $\btt(e) := j \in \{1,2,3\} = I$ is the {\em type} of $e$. With this terminology, the definitions and  arguments of Sect.~4 for graphs $Q$, $W$, $X_1$, $X_2$ and the group $F_2$ in place of  $\Psi$, $\Psi(H_1, H_2)$,  $\Psi_{o_1}(H_1)$, $\Psi_{o_2}(H_2)$ and $\FF$, resp.,  are retained.

\end{document}